\newcommand\relphantom[1]{\mathrel{\phantom{#1}}}
\begin{document}

\newtheorem{theorem}{Theorem}[section]
\newtheorem{corollary}[theorem]{Corollary}
\newtheorem{definition}[theorem]{Definition}
\newtheorem{proposition}[theorem]{Proposition}
\newtheorem{lemma}[theorem]{Lemma}
\newtheorem{example}[theorem]{Example}
\newenvironment{proof}{\noindent {\bf Proof.}}{\rule{3mm}{3mm}\par\medskip}
\newcommand{\remark}{\medskip\par\noindent {\bf Remark.~~}}

\title{A Class of Six-weight Cyclic Codes and Their Weight Distribution\footnote{This work is supported by the National Natural Science Foundation of China (No. 11071160).}}
\author{Yan Liu\footnote{Corresponding author, Dept. of Math., SJTU, Shanghai, 200240,  liuyan0916@sjtu.edu.cn.}, Haode Yan\footnote{Dept. of Math., Shanghai Jiaotong Univ., Shanghai, 200240, hdyan@sjtu.edu.cn.}, Chunlei Liu\footnote{Dept. of Math., Shanghai Jiaotong Univ., Shanghai,
200240, clliu@sjtu.edu.cn.}}

\date{}
\maketitle
\thispagestyle{empty}

\abstract{In this paper, a family of six-weight cyclic codes over $\mathbb{F}_{p}$  whose duals have three zeros is presented, where  $p$ is  an odd prime. And the weight distribution of these cyclic codes is determined. }

\noindent {\bf Key words and phrases:} cyclic code, quadratic form, weight distribution.

\noindent {\bf MSC:} 94B15, 11T71.

\section{\small{INTRODUCTION}}
Throughout this paper, let $m \geq 3$ be an odd integer and $k$ be a positive integer  such that  gcd$(m,k)=1$. Let $p$ be  an odd prime and $\pi$ be a primitive element of the finite field $\mathbb{F}_{p^{m}}$.

An $[n,l,d]$ linear code $\mathcal{C}$ over $\mathbb{F}_{p}$ is an $l$-dimensional subspace of $\mathbb{F}_{p}^{n}$ with minimum distance $d$. Let $A_{i}$ denote the number of codewords with Hamming weight $i$ in $\mathcal{C}$ of length $n$. The   weight enumerator of $\mathcal{C}$ is defined by $1+A_{1}Z+A_{2}Z^{2}+\cdots+A_{n}Z^{n}$. The sequence $(1, A_{1}, A_{2},\ldots, A_{n})$ is called the weight distribution of the code $\mathcal{C}$. And $\mathcal{C}$ is called cyclic if $(c_{0}, c_{1},  \ldots, c_{n-1}) \in \mathcal{C}$ implies $(c_{n-1}, c_{0},  \ldots, c_{n-2}) \in \mathcal{C}$. By identifying any vector $(c_{0}, c_{1},  \ldots, c_{n-1}) \in \mathbb{F}_{p}^{n}$  with $c_{0}+ c_{1}x+\cdots+c_{n-1}x^{n-1} \in \mathbb{F}_{p}[x]/(x^{n}-1)$, any cyclic code corresponds to an ideal of the polynomial residue class ring $\mathbb{F}_{p}[x]/(x^{n}-1)$. Since $\mathbb{F}_{p}[x]/(x^{n}-1)$ is a principal ideal ring, every cyclic code corresponds to a principal ideal $(g(x))$ of  the multiples of a polynomial $g(x)$ which is the monic polynomial of lowest degree in the ideal. This polynomial  $g(x)$ is called the generator polynomial, and $h(x)=(x^{n}-1)/g(x)$ is referred to as the parity-check polynomial of the code $\mathcal{C}$. A cyclic code is called irreducible if its parity-check polynomial  is irreducible over $\mathbb{F}_{p}$ and reducible, otherwise.

Clearly, the weight distribution gives the minimum distance of the code, and thus the error capability. In addition, the weight distribution of a code allows the computation of the error probability  of error detection and correction with respect to some error detection and error correction algorithms. Thus the study of the weight distribution of a linear code is important in both theory and applications. For cyclic codes, the error correcting capability may not be as good as some other linear codes in general. However, cyclic codes have wide applications in storage and communication systems because they have efficient encoding and decoding algorithms. So the weight distributions of cyclic codes have been interesting subjects of study for many years and are very hard problem in general.

For information on the weight distribution of irreducible cyclic codes, the reader is referred to \cite{1,2,5,6}. Information on the weight distributions of reducible cyclic codes could be found in \cite{7,8,9,10,18,12,13,14,15,19,20}. For the duals of the known cyclic codes whose weight distributions were determined, most of them have at most two zeros, only a few of them have three or more zeros.

The objective of this paper is to determine the weight distribution of a class of six-weight cyclic codes whose duals have three zeros.

This paper is  organized as follows. Section 2 presents some necessary results on quadratic forms which will be needed. Section 3 defines the family of cyclic codes and determines their weight distributions.

\section{\small{QUADRATIC FORMS OVER FINITE FIELDS}}
In this section, we give a brief introduction to the theory of quadratic forms over finite fields which will be needed to calculate the weight distribution of the cyclic codes in the next section.  Quadratic forms have been well studied $($see \cite{11} and the references therein$)$, and have application in design and coding theory.
\begin{definition}
Let $x=\sum_{i=1}^{m}x_{i}\varepsilon_{i}$ where $x_{i}\in \mathbb{F}_{p}$ and $\{\varepsilon_{1}, \varepsilon_{1}, \ldots, \varepsilon_{m}\}$ is a basis for $\mathbb{F}_{p}^{m}$ over $\mathbb{F}_{p}$. The a function $Q(x)$ from $\mathbb{F}_{p}^{m}$ to $\mathbb{F}_{p}$ is a quadratic form over $\mathbb{F}_{p}$ if it can be represented as
\[Q(x)=Q\big(\sum_{i=1}^{m}x_{i}\varepsilon_{i}\big)=\sum_{1\leq i\leq j\leq m}a_{ij}x_{i}x_{j},
\]
where $a_{ij} \in \mathbb{F}_{p}$.
\end{definition}
The rank of the quadratic form $Q(x)$ is defined as the codimension of the $\mathbb{F}_{p}$-vector space $V=\{x \in \mathbb{F}_{p}^{m}|Q(x+z)-Q(x)-Q(z)=0~ for~ all ~x \in \mathbb{F}_{p}^{m} \}$.

For a quadratic form  $F(x)$, there exists a symmetric matrix $A$ of order $m$ over $\mathbb{F}_{p}$ such that $F(x)= XAX'$, where $X=(x_{1}, x_{2},\ldots,x_{m})\in \mathbb{F}_{p}^{m}$ and $X'$ denotes the transpose of $X$. Then there exists a nonsingular matrix $H$ of order $m$ over $\mathbb{F}_{p}$ such that $MAM'$ is a diagonal matrix (\cite{11}). Under the nonsingular linear substitution $X=ZH$ with $Z=(z_{1}, z_{2},\ldots,z_{m})\in \mathbb{F}_{p}^{m}$, then $F(x)=ZMAM'Z'=\sum_{i=1}^{r}d_{i}z_{i}^{2}$, where $r$ is the rank of $F(x)$ and $d_{i} \in \mathbb{F}_{p}^{\ast}$. Let $\triangle= d_{1}d_{2}\cdots d_{r}$ (we assume $\triangle=0$ when $r=0$).  Then the Legendre symbol $(\frac{\triangle}{p})$ is an invariant of $A$ under the action of $H \in GL_{m}(\mathbb{F}_{p})$. The following results is useful in the next section.
\begin{lemma}[\cite{11}]
With the notations as above, we have
\[
\sum_{x \in \mathbb{F}_{p^{m}}}\zeta_{p}^{F(x)}=\begin{cases}
           (\frac{\triangle}{p})p^{m-\frac{r}{2}},&  p \equiv 1\pmod4,\\
            (\frac{\triangle}{p})(\sqrt{-1})^{r}p^{m-\frac{r}{2}},& p \equiv 3\pmod4,
           \end{cases}
\]
for any quadratic form $F(x)$ in $m$ variables of rank $r$ over $\mathbb{F}_{p}$, where $\zeta_{p}$ is a primitive $p$-th root of unity.
\end{lemma}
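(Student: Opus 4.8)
The plan is to reduce the $m$-dimensional exponential sum to a product of one-dimensional quadratic Gauss sums, by first diagonalizing $F$ and then invoking the classical evaluation of the quadratic Gauss sum. First I would use the diagonalization recalled just above the statement: there exist a nonsingular $H \in GL_{m}(\mathbb{F}_{p})$ and a substitution $X=ZH$ carrying $F$ into $\sum_{i=1}^{r}d_{i}z_{i}^{2}$ with $d_{i}\in\mathbb{F}_{p}^{\ast}$ and $\triangle=d_{1}\cdots d_{r}$. Since $H$ is invertible, the map $X\mapsto Z$ is a bijection of $\mathbb{F}_{p}^{m}$ onto itself, so replacing the summation variable leaves the value of $\sum_{x}\zeta_{p}^{F(x)}$ unchanged.

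Next I would factor the resulting sum. The variables $z_{r+1},\ldots,z_{m}$ do not occur in $\sum_{i=1}^{r}d_{i}z_{i}^{2}$, so summing over them contributes a factor $p^{m-r}$, while the first $r$ variables separate:
\[
\sum_{x \in \mathbb{F}_{p^{m}}}\zeta_{p}^{F(x)}=p^{m-r}\prod_{i=1}^{r}\Big(\sum_{t\in\mathbb{F}_{p}}\zeta_{p}^{d_{i}t^{2}}\Big).
\]
Each inner factor is a one-dimensional quadratic Gauss sum, and I would evaluate it via the standard identity $\sum_{t\in\mathbb{F}_{p}}\zeta_{p}^{dt^{2}}=(\frac{d}{p})\,g$, where $g=\sum_{t\in\mathbb{F}_{p}}\zeta_{p}^{t^{2}}$ and $(\frac{d}{p})$ is the Legendre symbol; this follows by counting solutions of $t^{2}=s$ through the quadratic character and using $\sum_{s\in\mathbb{F}_{p}}\zeta_{p}^{s}=0$. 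Multiplying the $r$ factors and using multiplicativity of the Legendre symbol then gives $\prod_{i=1}^{r}(\frac{d_{i}}{p})\,g=(\frac{\triangle}{p})\,g^{r}$.

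It remains to substitute the exact value of $g$. Here I would invoke the classical determination of the quadratic Gauss sum (see \cite{11}): $g=\sqrt{p}$ when $p\equiv 1\pmod 4$ and $g=\sqrt{-1}\,\sqrt{p}$ when $p\equiv 3\pmod 4$. Raising to the $r$-th power yields $g^{r}=p^{r/2}$ in the first case and $g^{r}=(\sqrt{-1})^{r}p^{r/2}$ in the second. Combining either with the prefactor $p^{m-r}$ produces $p^{m-\frac{r}{2}}$ multiplied by the appropriate sign, which is precisely the two cases of the lemma.

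The one genuinely hard input is the exact phase of $g$ — that its argument is $0$ or $\pi/2$ according to $p\bmod 4$, not merely that $|g|=\sqrt{p}$. This is the deep classical theorem of Gauss, which I would cite rather than reprove; everything else is a routine bijective change of variables, a factorization of the character sum, and bookkeeping with the Legendre symbol.
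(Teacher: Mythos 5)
Your proof is correct. Note that the paper does not actually prove this lemma: it is quoted directly from Lidl and Niederreiter \cite{11}, so there is no internal argument to compare against, and the route you take --- diagonalizing $F$ by a nonsingular substitution, factoring off $p^{m-r}$ from the $m-r$ variables absent from the diagonal form, reducing each diagonal term via $\sum_{t\in\mathbb{F}_{p}}\zeta_{p}^{d t^{2}}=(\frac{d}{p})\,g$, and invoking Gauss's determination of the phase of $g$ --- is precisely the standard proof found in that reference. One point worth flagging: Gauss's evaluation $g=\sqrt{p}$ (resp.\ $\sqrt{-1}\,\sqrt{p}$) holds for the canonical root $\zeta_{p}=e^{2\pi i/p}$; replacing $\zeta_{p}$ by $\zeta_{p}^{a}$ multiplies the sum by $(\frac{a}{p})^{r}$, so your proof, like the lemma as stated, tacitly fixes this normalization. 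Also, your empty-product reading of $(\frac{\triangle}{p})$ at $r=0$ yields the correct value $p^{m}$, which is in fact sounder than the paper's convention $\triangle=0$ in that degenerate case.
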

\begin{lemma}\label{Le:2.3}
Let $F(x)$ be a quadratic form in $m$ variables of rank $r$ over $\mathbb{F}_{p}$, then
 \[
  \sum_{y \in \mathbb{F}_{p}^{\ast}}\sum_{x \in \mathbb{F}_{p^{m}}}\zeta_{p}^{yF(x)}=\begin{cases}
           \pm (p-1)p^{m-\frac{r}{2}},& \textrm{r even},\\
            0,& otherwise.
           \end{cases}
\]
\end{lemma}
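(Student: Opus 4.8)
The plan is to reduce everything to Lemma~2.2 by treating the inner sum, for each fixed $y\in\mathbb{F}_p^{\ast}$, as the character sum of a new quadratic form. First I would observe that if $F(x)=\sum_{i=1}^{r}d_iz_i^2$ is the diagonalization guaranteed by the discussion preceding Lemma~2.2, then for $y\neq0$ the map $yF(x)=\sum_{i=1}^{r}(yd_i)z_i^2$ is again a quadratic form in $m$ variables. Since every coefficient $yd_i$ is nonzero, $yF$ has the same rank $r$ as $F$, and its invariant is
\[
\triangle_y=\prod_{i=1}^{r}(yd_i)=y^{r}\,d_1d_2\cdots d_r=y^{r}\triangle .
\]
Hence $(\frac{\triangle_y}{p})=(\frac{y}{p})^{r}(\frac{\triangle}{p})$ by complete multiplicativity of the Legendre symbol.

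Next I would apply Lemma~2.2 to $yF$ and sum over $y$. When $p\equiv1\pmod4$ this gives
\[
\sum_{y\in\mathbb{F}_p^{\ast}}\sum_{x\in\mathbb{F}_{p^m}}\zeta_p^{yF(x)}
=\Big(\tfrac{\triangle}{p}\Big)p^{m-\frac{r}{2}}\sum_{y\in\mathbb{F}_p^{\ast}}\Big(\tfrac{y}{p}\Big)^{r},
\]
and when $p\equiv3\pmod4$ the only change is an extra factor $(\sqrt{-1})^{r}$ in front. So the whole problem collapses to evaluating the character sum $S=\sum_{y\in\mathbb{F}_p^{\ast}}(\frac{y}{p})^{r}$.

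This last sum is elementary: when $r$ is even, $(\frac{y}{p})^{r}=1$ for every $y\in\mathbb{F}_p^{\ast}$, so $S=p-1$; when $r$ is odd, $(\frac{y}{p})^{r}=(\frac{y}{p})$, and since $\mathbb{F}_p^{\ast}$ contains equally many squares and non-squares, $S=0$. This immediately yields $0$ in the odd case. In the even case, substituting $S=p-1$ leaves the prefactor $(\frac{\triangle}{p})p^{m-\frac r2}(p-1)$ for $p\equiv1\pmod4$, and $(\frac{\triangle}{p})(\sqrt{-1})^{r}p^{m-\frac r2}(p-1)$ for $p\equiv3\pmod4$. Since $r$ is even, $(\sqrt{-1})^{r}=(-1)^{r/2}\in\{\pm1\}$, and the Legendre symbol is also $\pm1$, so both prefactors equal $\pm(p-1)p^{m-\frac r2}$, as claimed.

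I do not expect a genuine obstacle here; the one point that needs care is the rank-and-discriminant bookkeeping in the first step — checking that scaling by $y$ preserves the rank and multiplies $\triangle$ by exactly $y^{r}$ — since this is precisely what funnels the entire $y$-dependence into the single factor $(\frac{y}{p})^{r}$ and makes the parity of $r$ decide between $0$ and $\pm(p-1)p^{m-r/2}$.
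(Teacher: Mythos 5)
Your proof is correct and is essentially the argument the paper intends: the paper omits the proof of Lemma \ref{Le:2.3}, saying only that it is similar to the proof of Lemma 2.2 in \cite{19}, and that standard argument is exactly yours --- scaling $F$ by $y\in\mathbb{F}_p^{\ast}$ preserves the rank $r$ and multiplies $\triangle$ by $y^{r}$, so the paper's Lemma 2.2 funnels all the $y$-dependence into $\sum_{y\in\mathbb{F}_p^{\ast}}(\frac{y}{p})^{r}$, which equals $p-1$ for $r$ even and $0$ for $r$ odd. The one point to watch is the degenerate case $r=0$ (which the paper actually invokes in Case II of the proof of Lemma \ref{Le:3.2}): under the paper's stated convention $\triangle=0$ your prefactor $(\frac{\triangle}{p})$ would wrongly make the sum vanish instead of giving $(p-1)p^{m}$, so you should either adopt the empty-product convention $\triangle=1$ there or note directly that the sum is trivially $(p-1)p^{m}$ when $F\equiv 0$.
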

The proof is similar to the proof of Lemma 2.2 in \cite{19}, so we omit the details.

For any fixed $(u,v) \in \mathbb{F}_{p^m}^{2}$,  $Q_{u,v}(x)=Tr(ux^{2}+vx^{p^{k}+1})$, where $Tr$ is the trace map from $\mathbb{F}_{p^{m}}$ to $\mathbb{F}_{p}$. Moreover, we have the following result.
\begin{lemma}[\cite{9}]
For any $(u,v) \in \mathbb{F}_{p^{m}}^{2}\backslash \{(0,0)\}$, $Q_{u,v}(x)$ is a quadratic form over $\mathbb{F}_{p}$ with rank $m$, $m-1$, $m-2$.
\end{lemma}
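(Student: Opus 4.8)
The plan is to translate the rank assertion into a statement about the kernel of a linearized polynomial, and then to bound that kernel using the standing hypothesis $\gcd(m,k)=1$. Recall that for $p$ odd the rank of $Q_{u,v}$ equals $m-\dim_{\mathbb{F}_p}V$, where $V$ is the radical of the polar form $B(x,z)=Q_{u,v}(x+z)-Q_{u,v}(x)-Q_{u,v}(z)$. Expanding $(x+z)^2$ and $(x+z)^{p^k+1}$ gives $B(x,z)=Tr(2uxz+vx^{p^k}z+vz^{p^k}x)$, and using the Frobenius invariance of the trace (so that $Tr(vx^{p^k}z)=Tr(v^{p^{m-k}}xz^{p^{m-k}})$) I can write $B(x,z)=Tr\bigl(x\,M(z)\bigr)$ with $M(z)=2uz+vz^{p^k}+v^{p^{m-k}}z^{p^{m-k}}$. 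Since the trace bilinear form is nondegenerate, $V=\ker M$, so the whole lemma reduces to the inequality $\dim_{\mathbb{F}_p}\ker M\le 2$.

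If $v=0$ then $u\ne 0$, and as $p$ is odd the map $M(z)=2uz$ is a bijection, so $V=0$ and the rank is $m$. Assume henceforth $v\ne 0$. Because raising to the $p^k$-th power permutes $\mathbb{F}_{p^m}$, replacing $M$ by $M(z)^{p^k}$ does not change the kernel; using $z^{p^m}=z$ this yields $N(z)=v^{p^k}z^{p^{2k}}+(2u)^{p^k}z^{p^k}+vz$, and after dividing by the nonzero scalar $v^{p^k}$ and writing $\theta$ for the automorphism $z\mapsto z^{p^k}$, the condition $z\in V$ becomes $\theta^{2}(z)+A\,\theta(z)+Bz=0$ with $A=(2u/v)^{p^k}$ and $B=v^{1-p^k}\ne 0$. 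Thus $V$ is exactly the set of elements of $\mathbb{F}_{p^m}$ annihilated by a linearized operator of $\theta$-degree two.

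To finish, I will bound $\dim_{\mathbb{F}_p}V$ by invoking the theory of linearized (skew) polynomials. The coprimality $\gcd(m,k)=1$ guarantees that $\theta$ generates $\mathrm{Gal}(\mathbb{F}_{p^m}/\mathbb{F}_p)$, has order exactly $m$, and has fixed field precisely $\mathbb{F}_p$. Consequently every $z\in\mathbb{F}_{p^m}$ satisfies $(\theta^m-1)(z)=0$, and the common solutions in $\mathbb{F}_{p^m}$ of $P:=\theta^2+A\theta+B$ and of $\theta^m-1$ form an $\mathbb{F}_p$-space whose dimension equals the $\theta$-degree of the greatest common right divisor $\mathrm{gcrd}(P,\theta^m-1)$. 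Since $\deg_\theta P=2<m$, this right divisor has degree at most $2$, whence $\dim_{\mathbb{F}_p}V\le 2$ and the rank of $Q_{u,v}$ is $m$, $m-1$, or $m-2$. Concretely, when $P$ admits a right factorization $P=(\theta-\mu)(\theta-\lambda)$ over $\mathbb{F}_{p^m}$ one sees this directly: $\dim_{\mathbb{F}_p}V\le\dim_{\mathbb{F}_p}\ker(\theta-\lambda)+\dim_{\mathbb{F}_p}\ker(\theta-\mu)$, and each factor $\theta-c$ has kernel of dimension at most one because the nonzero solutions of $z^{p^k}=cz$ satisfy $z^{p^k-1}=c$, a set whose size is controlled by $\gcd(p^k-1,p^m-1)=p^{\gcd(m,k)}-1=p-1$.

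The crux is precisely the final bound $\dim_{\mathbb{F}_p}V\le 2$. The naive estimate coming from the $p$-degree of $N$ only gives $\dim_{\mathbb{F}_p}V\le 2k$, which is vacuous for large $k$; the entire force of the hypothesis $\gcd(m,k)=1$ is to make $\theta=(\cdot)^{p^k}$ a generator of the Galois group with fixed field $\mathbb{F}_p$, so that the relevant measure of size is the $\theta$-degree $2$ rather than the $p$-degree $2k$. I therefore expect the main work to be a careful justification of the kernel-dimension bound for the degree-two operator $P$, with the reduction steps above being essentially formal.
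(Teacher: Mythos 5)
The paper offers no proof of this lemma at all: it is quoted with a citation to Feng--Luo \cite{9}, so there is no internal argument to compare against and your proof must stand on its own. Your reduction is correct and cleanly executed: the radical of the polar form is $\ker M$ with $M(z)=2uz+vz^{p^k}+v^{p^{m-k}}z^{p^{m-k}}$, the case $v=0$ gives rank $m$, and for $v\neq 0$ the kernel condition becomes $\theta^{2}(z)+A\theta(z)+Bz=0$ with $B=v^{1-p^k}\neq 0$, where $\theta$ is the $p^k$-power Frobenius. Your diagnosis of where $\gcd(m,k)=1$ enters (it makes $\theta$ generate the Galois group with fixed field exactly $\mathbb{F}_p$) is also exactly right, and the Ore-type theorem you invoke --- that the solutions in $\mathbb{F}_{p^m}$ of $P$ coincide with those of $\mathrm{gcrd}(P,\theta^m-1)$ and form an $\mathbb{F}_p$-space of dimension equal to its $\theta$-degree --- is a genuine theorem under that hypothesis, so your argument is correct. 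It is, however, a heavier route than the standard elementary one in this literature (and in \cite{9}): observe that $N(z)=v^{p^k}z^{p^{2k}}+(2u)^{p^k}z^{p^k}+vz$ is a \emph{separable} $p^k$-linearized polynomial (its coefficient of $z$ is $v\neq 0$), so its roots in $\overline{\mathbb{F}}_p$ form an $\mathbb{F}_{p^k}$-vector space of dimension at most $2$; since $\gcd(m,k)=1$ makes $\mathbb{F}_{p^k}$ and $\mathbb{F}_{p^m}$ linearly disjoint over $\mathbb{F}_p$, any $\mathbb{F}_p$-independent elements of $V\subseteq\mathbb{F}_{p^m}$ remain $\mathbb{F}_{p^k}$-independent, whence $\dim_{\mathbb{F}_p}V\leq 2$ directly, with no skew-polynomial machinery.

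One caveat on your write-up: the ``concrete'' verification via a factorization $P=(\theta-\mu)(\theta-\lambda)$ cannot be promoted to the main argument, because a $\theta$-degree-two skew polynomial need not split into linear factors over $\mathbb{F}_{p^m}$. For instance $\theta^{2}+B$ admits a right factor $\theta-\lambda$ iff $\lambda^{p^k+1}=-B$ is solvable, i.e.\ iff $-B$ is a $(p^k+1)$-st power, equivalently a square (here $\gcd(p^k+1,p^m-1)=2$ since $m$ is odd); for nonsquare $-B$ no such factorization exists. So if you do not wish to cite the gcrd theorem as a black box, replace it either by the linear-disjointness argument above, or by the short induction that the gcrd theorem encodes: if $w_0\neq 0$ lies in $\ker P$, right division gives $P=q\cdot(\theta-\sigma(w_0)/w_0)$ exactly (the remainder is a constant $r$ with $rw_0=0$, hence $r=0$); the kernel of $\theta-\sigma(w_0)/w_0$ is exactly $\mathbb{F}_p w_0$ because the fixed field of $\sigma$ is $\mathbb{F}_p$; and induction on $\theta$-degree then yields $\dim_{\mathbb{F}_p}\ker P\leq \deg_{\theta}P=2$.
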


\section{\small{THE CLASS OF SIX-WEIGHT CYCLIC CODES AND THEIR WEIGHT DISTRIBUTION}}
We follow the notations fixed in Section 1. In this section, we first introduce the family of cyclic codes to be studied. Let $h_{0}(x)$, $h_{1}(x)$ and $h_{2}(x)$ be the minimal polynomials of $\pi^{-1}$, $(-\pi)^{-1}$ and $\pi^{-(p^{k}+1)/2}$ over $\mathbb{F}_{p}$, respectively. It is easy to check that $h_{0}(x)$, $h_{1}(x)$ and $h_{2}(x)$ are polynomials of degree $m$ and are pairwise distinct. Define $h(x)=h_{0}(x)h_{1}(x)h_{2}(x)$. Then $h(x)$ has degree $3m$ and is a factor of $x^{p^{m}-1}-1$.

Let $\mathcal{C}_{(p,m,k)}$ be the cyclic code with parity-check polynomial $h(x)$. Then $\mathcal{C}_{(p,m,k)}$ has length $p^{m}-1$ and dimension $3m$. Moreover, it can be expressed as \[\mathcal{C}_{(p,m,k)}=\{\mathbf{c}_{(a,b,c)}: a, b,c \in \mathbb{F}_{p^{m}}\},\] where \[\mathbf{c}_{(a,b,c)}=\big(Tr(a\pi^{t}+b(-\pi)^{t}+c\pi^{(p^{k}+1)t/2})\big)_{t=0}^{p^{m}-2}.\]

Let $h'(x)=h_{1}(x)h_{2}(x)$ and $\mathcal{C}'_{(p,m,k)}$ be the cyclic code with parity-check polynomial $h'(x)$. Then $\mathcal{C}'_{(p,m,k)}$ is a subcode of $\mathcal{C}_{(p,m,k)}$ with dimension $2m$. Zhengchun Zhou and Cunsheng Ding \cite{19} proved that $\mathcal{C}'_{(p,m,k)}$ have three nonzero weights and determined its weight distribution. In this paper, we will show that $\mathcal{C}_{(p,m,k)}$ have six nonzero weights and determine the weight distribution of this class of cyclic codes  $\mathcal{C}_{(p,m,k)}$.

 From now on, we always assume that $\lambda$ is a fixed nonsquare in $\mathbb{F}_{p}$. Since $m$ is odd, it is also a nonsquare in $\mathbb{F}_{p^{m}}$. Then if $SQ$ denotes  the set of all nonzero square elements of $\mathbb{F}_{p^{m}}$, $\lambda x$ runs through all nonsquares of $\mathbb{F}_{p^{m}}$ as $x$ runs through $SQ$. In addition, we have the following result.
\begin{lemma}[\cite{19}]\label{Le:3.1}
$\lambda^{(1+p^{k})/2}=\lambda$ if $k$ is even, and $\lambda^{(1+p^{k})/2}=-\lambda$ otherwise.
\end{lemma}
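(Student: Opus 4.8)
The plan is to reduce the quantity $\lambda^{(1+p^k)/2}$ to $\pm\lambda$ and then pin down the sign by a parity computation. The key initial observation is that $\lambda$ lies in the prime field $\mathbb{F}_p$, so it is fixed by the Frobenius map; equivalently $\lambda^{p}=\lambda$ and hence $\lambda^{p^k}=\lambda$. Consequently $\lambda^{p^k+1}=\lambda^{p^k}\cdot\lambda=\lambda^2$, which shows immediately that $\lambda^{(p^k+1)/2}$ is a square root of $\lambda^2$ and therefore equals $\pm\lambda$. To decide which sign occurs, I would write $\lambda^{(p^k+1)/2}=\lambda\cdot\lambda^{(p^k-1)/2}$, so that the whole problem collapses to evaluating the sign $\lambda^{(p^k-1)/2}\in\{1,-1\}$.

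The second step is to evaluate this sign via Euler's criterion. Factoring the exponent gives $(p^k-1)/2=\frac{p-1}{2}\bigl(1+p+\cdots+p^{k-1}\bigr)$, which is a genuine integer identity because $(p-1)/2\in\mathbb{Z}$. Since $\lambda$ is a nonsquare in $\mathbb{F}_p$, Euler's criterion yields $\lambda^{(p-1)/2}=\left(\frac{\lambda}{p}\right)=-1$. Raising to the power $1+p+\cdots+p^{k-1}$ then gives $\lambda^{(p^k-1)/2}=(-1)^{\,1+p+\cdots+p^{k-1}}$.

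The final step is a parity count. The exponent $1+p+\cdots+p^{k-1}$ is a sum of exactly $k$ terms, each of which is odd because $p$ is odd; hence this sum is congruent to $k$ modulo $2$. Therefore $\lambda^{(p^k-1)/2}=(-1)^k$, and multiplying back by $\lambda$ gives $\lambda^{(1+p^k)/2}=(-1)^k\lambda$, which is exactly $\lambda$ when $k$ is even and $-\lambda$ when $k$ is odd. The computation is entirely elementary, so there is no serious obstacle; the only point requiring a little care is the sign bookkeeping, namely correctly invoking Euler's criterion in $\mathbb{F}_p$ and matching the parity of the geometric sum $1+p+\cdots+p^{k-1}$ to the parity of $k$. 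I note that the hypothesis $\gcd(m,k)=1$ and the ambient extension $\mathbb{F}_{p^m}$ play no role here: the statement lives entirely in $\mathbb{F}_p$.
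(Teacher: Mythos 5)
Your proof is correct, and every step checks out: $\lambda^{p^k}=\lambda$ since $\lambda\in\mathbb{F}_p$, the factorization $(p^k-1)/2=\frac{p-1}{2}(1+p+\cdots+p^{k-1})$ is a valid integer identity, Euler's criterion gives $\lambda^{(p-1)/2}=-1$ for a nonsquare, and the geometric sum of $k$ odd terms indeed has the parity of $k$. Note that the paper itself gives no proof of this lemma --- it is stated as a citation to Zhou and Ding's paper --- so your argument cannot be compared against an in-paper proof; what you have supplied is the natural elementary verification, and it is complete and self-contained. Your closing observation is also accurate: the hypotheses $\gcd(m,k)=1$ and the ambient field $\mathbb{F}_{p^m}$ are irrelevant to this particular statement, which is a computation entirely inside $\mathbb{F}_p$.
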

In terms of exponential sums, the weight of the codeword $\mathbf{c}_{(a,b,c)}=(c_{0}, c_{1},\ldots,c_{p^{m}-2})$ in $\mathcal{C}_{(p,m,k)}$ is given by

\begin{equation*}
 \begin{split}
 W(\mathbf{c}_{(a,b,c)})
   &= \#\{0\leq t\leq p^{m}-2: c_{t}\neq 0\}\\
   &= p^{m}-1-\frac{1}{p}\sum_{t=0}^{p^{m}-2}\sum_{y \in \mathbb{F}_{p}}\zeta_{p}^{yc(t)} \\
   &=  p^{m}-1-\frac{1}{p}\sum_{t=0}^{p^{m}-2}\sum_{y \in \mathbb{F}_{p}}\zeta_{p}^{yTr(a\pi^{t}+b(-\pi)^{t}+c\pi^{(p^{k}+1)t/2}}) \\
   &=   p^{m}-1- \frac{1}{p}\sum_{y \in \mathbb{F}_{p}}\sum_{t=0}^{(p^{m}-3)/2}\big(\zeta_{p}^{yTr((a+b)\pi^{2t+1}+c\pi^{(p^{k}+1)t})} +\zeta_{p}^{yTr((a-b)\pi^{2t}+c\pi^{\frac{p^{k}+1}{2}(2t+1)})}\big) \\
   &=   p^{m}-1- \frac{1}{p}\sum_{y \in \mathbb{F}_{p}}\sum_{x \in SQ}\big(\zeta_{p}^{yTr((a+b)x+cx^{(p^{k}+1)/2})} +\zeta_{p}^{yTr((a-b)\pi x+c(\pi x)^{\frac{p^{k}+1}{2}})}\big) \\
   &=   p^{m}-1- \frac{1}{p}\sum_{y \in \mathbb{F}_{p}}\sum_{x \in SQ}\big(\zeta_{p}^{yTr((a+b)x+cx^{(p^{k}+1)/2})} +\zeta_{p}^{yTr((a-b)\lambda x+c(\lambda x)^{\frac{p^{k}+1}{2}})}\big) \\
   &=   p^{m}-1- \frac{1}{2p}\sum_{y \in \mathbb{F}_{p}}\sum_{x \in \mathbb{F}_{p^{m}}^{\ast}}\big(\zeta_{p}^{yTr((a+b)x^{2}+cx^{p^{k}+1})} +\zeta_{p}^{yTr((a-b)\lambda x^{2}+c\lambda^{\frac{p^{k}+1}{2}} x^{p^{k}+1})}\big) \\
   &=   p^{m}-p^{m-1}- \frac{1}{2p}\sum_{y \in \mathbb{F}_{p}^{\ast}}\sum_{x \in \mathbb{F}_{p^{m}}}\big(\zeta_{p}^{yTr((a+b)x^{2}+cx^{p^{k}+1})} +\zeta_{p}^{yTr((a-b)\lambda x^{2}+c\lambda^{\frac{p^{k}+1}{2}} x^{p^{k}+1})}\big).
 \end{split}
\end{equation*}
It then follows from Lemma \ref{Le:3.1} that $W(\mathbf{c}_{(a,b,c)})=p^{m}-p^{m-1}- \frac{1}{2p}S(a,b,c)$ when $k$ is even, where
\begin{equation}\label{Eq:3.1}
S(a,b,c)=\sum_{y \in \mathbb{F}_{p}^{\ast}}\sum_{x \in \mathbb{F}_{p^{m}}}\big(\zeta_{p}^{yTr((a+b)x^{2}+cx^{p^{k}+1})} +\zeta_{p}^{yTr((a-b)\lambda x^{2}+c\lambda x^{p^{k}+1}}\big),
 \end{equation}
 and $W(\mathbf{c}_{(a,b,c)})=p^{m}-p^{m-1}- \frac{1}{2p}T(a,b,c)$ when $k$ is odd, where
 \begin{equation}\label{Eq:3.2}
 T(a,b,c)=\sum_{y \in \mathbb{F}_{p}^{\ast}}\sum_{x \in \mathbb{F}_{p^{m}}}\big(\zeta_{p}^{yTr((a+b)x^{2}+cx^{p^{k}+1})} +\zeta_{p}^{yTr((a-b)\lambda x^{2}-c\lambda x^{p^{k}+1}}\big).
 \end{equation}
Based on the discussions above, the weight distribution of the code $\mathcal{C}_{(p,m,k)}$ is completely determined by the value distribution of $S(a,b,c)$ and $T(a,b,c)$. Before doing this, we first give a notation.
For any $(u,v) \in \mathbb{F}_{p^m}^{2}$,
\begin{equation}\label{Eq:3.3}
D(u, v)=\sum_{y \in \mathbb{F}_{p}^{\ast}}\sum_{x \in \mathbb{F}_{p^{m}}}\zeta_{p}^{yQ_{u,v}(x)}=\sum_{y \in \mathbb{F}_{p}^{\ast}}\sum_{x \in \mathbb{F}_{p^{m}}}\zeta_{p}^{yTr(ux^{2}+vx^{p^{k}+1})}.
\end{equation}
The following lemmas are very important to establish the value distribution of $S(a,b,c)$ and $T(a,b,c)$.
\begin{lemma}\label{Le:3.3}
Let $D(u,v)$ be defined by (\ref{Eq:3.3}).
\[D(u,0)=
\begin{cases}
(p-1)p^{m}& u=0\\
0& u\neq 0.
\end{cases}
\]
\end{lemma}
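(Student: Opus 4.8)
The plan is to split the evaluation of $D(u,0)=\sum_{y\in\mathbb{F}_p^\ast}\sum_{x\in\mathbb{F}_{p^m}}\zeta_p^{yTr(ux^2)}$ into the two cases $u=0$ and $u\neq 0$, and in the nontrivial case to recognize $Tr(ux^2)$ as a quadratic form whose rank can be computed directly, after which Lemma~\ref{Le:2.3} finishes the argument.

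First, when $u=0$ the exponent $yTr(ux^2)$ vanishes identically, so every summand equals $\zeta_p^0=1$. The inner sum over $x\in\mathbb{F}_{p^m}$ then contributes $p^m$, and summing over the $p-1$ values $y\in\mathbb{F}_p^\ast$ yields $D(0,0)=(p-1)p^m$, exactly as claimed.

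Next, suppose $u\neq 0$. Here $F(x):=Tr(ux^2)$ is a quadratic form in $m$ variables over $\mathbb{F}_p$, and the key step is to determine its rank. Following the definition of rank as the codimension of the radical, I would compute the associated symmetric bilinear form $B(x,z)=F(x+z)-F(x)-F(z)=Tr(2uxz)$. A vector $x$ lies in the radical precisely when $Tr(2uxz)=0$ for every $z\in\mathbb{F}_{p^m}$; since $p$ is odd and $u\neq 0$ we have $2u\neq 0$, so by the nondegeneracy of the trace form $(a,b)\mapsto Tr(ab)$ this forces $x=0$. Hence the radical is trivial and $F$ has full rank $r=m$.

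Finally, because $m$ is odd, the rank $r=m$ is odd, so the second case of Lemma~\ref{Le:2.3} applies and gives $D(u,0)=0$. The only point that requires any care is the rank computation, but it reduces at once to the standard nondegeneracy of the trace bilinear form, so I do not anticipate any genuine obstacle.
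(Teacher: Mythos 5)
Your proof is correct and follows essentially the same route as the paper: compute the $u=0$ case directly, and for $u\neq 0$ observe that $Tr(ux^2)$ is a quadratic form of rank $m$ (odd), so Lemma~\ref{Le:2.3} forces the sum to vanish. The only difference is that you justify the rank claim via nondegeneracy of the trace bilinear form, a detail the paper simply asserts.
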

\begin{proof}
By Eq. (\ref{Eq:3.3}),
\[
D(u, 0)=\sum_{y \in \mathbb{F}_{p}^{\ast}}\sum_{x \in \mathbb{F}_{p^{m}}}\zeta_{p}^{yQ_{u,0}(x)}=\sum_{y \in \mathbb{F}_{p}^{\ast}}\sum_{x \in \mathbb{F}_{p^{m}}}\zeta_{p}^{yTr(ux^{2})}.
\]
Then $D(0, 0)=(p-1)p^{m}$. If $u\neq 0$, $Q_{u,0}(x)=Tr(ux^{2})$ is a quadratic form of rank $m$ over $\mathbb{F}_{p}$. So $D(u, 0)=0$ by Lemma \ref{Le:2.3}.
\end{proof}
\begin{lemma}\label{Le:3.4}
Let $D(u,v)$ be defined by (\ref{Eq:3.3}). Then for any fixed $v \in \mathbb{F}_{p^{m}}^{\ast}$, as $u$ runs through $\mathbb{F}_{p^{m}}$, the value distribution of $D(u,v)$ is given by Table 1.
\end{lemma}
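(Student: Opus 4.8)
The plan is to reduce the value distribution of $D(u,v)$ to three possibilities using the quadratic-form machinery of Section 2, and then pin down the three frequencies by computing the first and second moments of $D(u,v)$ as $u$ ranges over $\mathbb{F}_{p^m}$. First I would record the list of possible values. For a fixed $v\in\mathbb{F}_{p^m}^{\ast}$ and any $u$ we have $(u,v)\neq(0,0)$, so by the result of \cite{9} quoted above the form $Q_{u,v}$ has rank $r\in\{m,m-1,m-2\}$; since $m$ is odd, the only even rank among these is $m-1$. Hence Lemma \ref{Le:2.3} gives $D(u,v)=0$ whenever $r\in\{m,m-2\}$ and $D(u,v)=\pm(p-1)p^{(m+1)/2}$ when $r=m-1$. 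Thus $D(u,v)$ attains only the three values $0$, $(p-1)p^{(m+1)/2}$, $-(p-1)p^{(m+1)/2}$; writing $n_0,n_+,n_-$ for the numbers of $u$ realizing them, we have $n_0+n_++n_-=p^m$.

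Next I compute the first moment. Starting from (\ref{Eq:3.3}) and interchanging the order of summation,
\[
\sum_{u\in\mathbb{F}_{p^m}}D(u,v)=\sum_{y\in\mathbb{F}_p^{\ast}}\sum_{x\in\mathbb{F}_{p^m}}\zeta_p^{yTr(vx^{p^k+1})}\sum_{u\in\mathbb{F}_{p^m}}\zeta_p^{Tr(yux^2)},
\]
and the innermost sum over $u$ vanishes by orthogonality unless $x=0$, in which case it equals $p^m$. This leaves $\sum_u D(u,v)=(p-1)p^m$, and comparing with $(p-1)p^{(m+1)/2}(n_+-n_-)$ gives $n_+-n_-=p^{(m-1)/2}$.

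The crux is the second moment, which I expect to be the main obstacle. Expanding the square and summing over $u$ first, orthogonality of additive characters imposes the quadratic constraint $y_1x_1^2+y_2x_2^2=0$ and supplies a factor $p^m$, so that
\[
\sum_{u\in\mathbb{F}_{p^m}}D(u,v)^2=p^m\sum_{y_1,y_2\in\mathbb{F}_p^{\ast}}\ \sum_{\substack{x_1,x_2\in\mathbb{F}_{p^m}\\ y_1x_1^2+y_2x_2^2=0}}\zeta_p^{Tr(v(y_1x_1^{p^k+1}+y_2x_2^{p^k+1}))}.
\]
The decisive point is that a nonzero solution forces $x_2=sx_1$ with $s^2\in\mathbb{F}_p$; because $m$ is odd we have $\mathbb{F}_{p^2}\cap\mathbb{F}_{p^m}=\mathbb{F}_p$, so in fact $s\in\mathbb{F}_p^{\ast}$. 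Then $s^{p^k+1}=s^2$ together with $y_1=-y_2s^2$ makes the trace argument identically zero, collapsing every surviving phase to $1$. Counting solutions (the case $x_1=0$ forces $x_2=0$, while each $x_1\neq0$ admits $p-1$ choices of $s$ and $p-1$ of $y_2$) yields $\sum_u D(u,v)^2=(p-1)^2p^{2m}$, hence $n_++n_-=p^{m-1}$.

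Finally, combining $n_0+n_++n_-=p^m$, $n_+-n_-=p^{(m-1)/2}$, and $n_++n_-=p^{m-1}$ gives
\[
n_0=(p-1)p^{m-1},\qquad n_+=\frac{p^{m-1}+p^{(m-1)/2}}{2},\qquad n_-=\frac{p^{m-1}-p^{(m-1)/2}}{2},
\]
which is exactly the distribution asserted in Table 1. The reduction step and the two moment identities are routine once set up; the only genuinely delicate calculation is the vanishing of the phase in the second moment, where the oddness of $m$ is indispensable.
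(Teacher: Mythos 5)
Your proof is correct and follows essentially the same route as the paper: restrict the values of $D(u,v)$ to $\{0,\pm(p-1)p^{\frac{m+1}{2}}\}$ via the rank facts and Lemma \ref{Le:2.3}, then compute the first and second moments of $D(u,v)$ over $u$ and solve for the three frequencies. It is worth noting that your second-moment value $(p-1)^{2}p^{2m}$ is the correct one: the paper's displayed computation (\ref{Eq:3.7}) contains slips (its leading term should be $(p-1)^{2}p^{m}$ rather than $(p-1)p^{m}$, and its final line should read $(p-1)^{2}p^{2m}$ rather than $(p-1)p^{2m}$), and only your value is consistent with (\ref{Eq:3.5}) and the frequencies in Table 1.
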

\begin{table}[htbp]\label{T:1}
\caption{Value distribution of $D(u,v)$ for fixed $v \in \mathbb{F}_{p^{m}}^{\ast}$}
\centering
\begin{tabular}{ll}
 \hline
 Value& Frequency\\
 \hline
  0 & $p^{m}-p^{m-1}$\\
 $(p-1)p^{\frac{m+1}{2}}$ & $\frac{1}{2}(p^{m-1}+p^{\frac{m-1}{2}})$\\
 $-(p-1)p^{\frac{m+1}{2}}$ &  $\frac{1}{2}(p^{m-1}-p^{\frac{m-1}{2}})$\\
 \hline
\end{tabular}
\end{table}
\begin{proof}
As in Eq. (\ref{Eq:3.3}),
\begin{equation*}
D(u, v)=\sum_{y \in \mathbb{F}_{p}^{\ast}}\sum_{x \in \mathbb{F}_{p^{m}}}\zeta_{p}^{yQ_{u,v}(x)}.
\end{equation*} Then for any $v \in \mathbb{F}_{p^{m}}^{\ast}$, by Lemma \ref{Le:2.3},the values of $D(u, v)$ takes on only the values from the set \{0, $\pm(p-1)p^{\frac{m+1}{2}}$\}.
To determine the distribution of $D(u,v)$ for any fixed $v \in \mathbb{F}_{p^{m}}^{\ast}$, we define
\[
n_{\epsilon}=\#\{u \in \mathbb{F}_{p^{m}}: D(u,v)=\epsilon(p-1)p^{\frac{m+1}{2}}\},
\]
where $\epsilon=0, \pm1$.
Then we have
\begin{equation}\label{Eq:3.4}
\sum_{u \in \mathbb{F}_{p^{m}}}D(u,v)=(n_{1}-n_{-1})(p-1)p^{\frac{m+1}{2}}
\end{equation}
and
\begin{equation}\label{Eq:3.5}
\sum_{u \in \mathbb{F}_{p^{m}}}D^{2}(u,v)=(n_{1}+n_{-1})(p-1)^{2}p^{m+1}.
\end{equation}
On the other hand, it follows from (\ref{Eq:3.3}) that
\begin{equation}\label{Eq:3.6}
\begin{split}
\sum_{u \in \mathbb{F}_{p^{m}}}D(u,v)&=\sum_{u \in \mathbb{F}_{p^{m}}}\sum_{y \in \mathbb{F}_{p}^{\ast}}\sum_{x \in \mathbb{F}_{p^{m}}}\zeta_{p}^{yTr(ux^{2}+vx^{p^{k}+1})}\\
&=\sum_{y \in \mathbb{F}_{p}^{\ast}}\sum_{x \in \mathbb{F}_{p^{m}}}\zeta_{p}^{yTr(vx^{p^{k}+1})}\sum_{u \in \mathbb{F}_{p^{m}}}\zeta_{p}^{yTr(ux^{2})}\\
&=(p-1)p^{m}
 \end{split}
\end{equation}
and
\begin{equation}\label{Eq:3.7}
\begin{split}
&\sum_{u \in \mathbb{F}_{p^{m}}}D^{2}(u,v)\\
&=\sum_{u \in \mathbb{F}_{p^{m}}}\big(\sum_{y_{1} \in \mathbb{F}_{p}^{\ast}}\sum_{x_{1} \in \mathbb{F}_{p^{m}}}\zeta_{p}^{y_{1}Tr(ux_{1}^{2}+vx_{1}^{p^{k}+1})}\big)
\big(\sum_{y_{2} \in \mathbb{F}_{p}^{\ast}}\sum_{x_{2} \in \mathbb{F}_{p^{m}}}\zeta_{p}^{y_{2}Tr(ux_{2}^{2}+vx_{2}^{p^{k}+1})}\big)\\
&=\sum_{u \in \mathbb{F}_{p^{m}}}\sum_{(y_{1},y_{2}) \in \mathbb{F}_{p}^{\ast 2}}\sum_{(x_{1},x_{2}) \in \mathbb{F}_{p^{m}}^{2}}\zeta_{p}^{Tr(u(y_{1}x_{1}^{2}+y_{2}x_{2}^{2})
+v(y_{1}x_{1}^{p^{k}+1}+y_{2}x_{2}^{p^{k}+1}))}\\
&=(p-1)p^{m}+\sum_{u \in \mathbb{F}_{p^{m}}}\sum_{(y_{1},y_{2}) \in \mathbb{F}_{p}^{\ast 2}}\sum_{(x_{1},x_{2}) \in \mathbb{F}_{p^{m}}^{\ast2}}\zeta_{p}^{Tr(u(y_{1}x_{1}^{2}+y_{2}x_{2}^{2})
+v(y_{1}x_{1}^{p^{k}+1}+y_{2}x_{2}^{p^{k}+1}))}\\
&=(p-1)p^{m}+\sum_{(y_{1},y_{2}) \in \mathbb{F}_{p}^{\ast 2}}\sum_{(x_{1},x_{2}) \in \mathbb{F}_{p^{m}}^{\ast2}}\zeta_{p}^{Tr(v(y_{1}x_{1}^{p^{k}+1}+y_{2}x_{2}^{p^{k}+1}))}\sum_{u \in \mathbb{F}_{p^{m}}}\zeta_{p}^{Tr(u(y_{1}x_{1}^{2}+y_{2}x_{2}^{2}))}\\
&=(p-1)p^{m}+\sum_{(y_{1},y_{2}) \in \mathbb{F}_{p}^{\ast 2}}\sum_{(x_{1},x_{2}) \in \mathbb{F}_{p^{m}}^{\ast2}}\zeta_{p}^{Tr(v(y_{1}x_{1}^{p^{k}+1}-y_{2}x_{2}^{p^{k}+1}))}\sum_{u \in \mathbb{F}_{p^{m}}}\zeta_{p}^{Tr(u(y_{1}x_{1}^{2}-y_{2}x_{2}^{2}))}\\
&=(p-1)p^{m}+p^{m}\sum_{t^{2} \in Sq}\sum_{\begin{subarray}{l}
y_{2} \in \mathbb{F}_{p}^{\ast }\\
y_{1}=y_{2}t^{2}
\end{subarray}}\sum_{\begin{subarray}{l}
x_{1} \in \mathbb{F}_{p^{m}}^{\ast}\\
x_{2}^{2}=x_{1}^{2}t^{2}
\end{subarray}}\zeta_{p}^{Tr(v(y_{1}x_{1}^{p^{k}+1}
-y_{2}x_{2}^{p^{k}+1}))}\\
&= (p-1)p^{m}+2p^{m}\sum_{t^{2} \in Sq}\sum_{y_{2} \in \mathbb{F}_{p}^{\ast }}\sum_{x_{1} \in \mathbb{F}_{p^{m}}^{\ast}}\zeta_{p}^{Tr(v(y_{2}t^{2}x_{1}^{p^{k}+1}
-y_{2}t^{p^{k}+1}x_{2}^{p^{k}+1}))}\\
&=(p-1)p^{m}+2p^{m}\frac{p-1}{2}(p-1)(p^{m}-1)\\
&=(p-1)p^{2m},
\end{split}
\end{equation}
where in the sixth identity we use $Sq$ to denote the set of square elements in $\mathbb{F}_{p}^{\ast}$ and  in the eighth identity we used the fact that $t^{p^{k}+1}=t$ since $t  \in \mathbb{F}_{p}$. Combining Eqs. (\ref{Eq:3.4})-(\ref{Eq:3.7}), we get
\[
n_{1}=\frac{1}{2}(p^{m-1}+p^{\frac{m-1}{2}}),
\]
\[
n_{-1}=\frac{1}{2}(p^{m-1}-p^{\frac{m-1}{2}}).
\]
Then we have $n_{0}=p^{m}-n_{1}-n_{-1}=p^{m}-p^{m-1}$.
\end{proof}
The value distribution of $S(a,b,c)$ will be determined in the following.
\begin{lemma}\label{Le:3.2}
Let $k$ be even and $S(a,b,c)$ be defined by (\ref{Eq:3.1}), then for any $(a,b,c) \in \mathbb{F}_{p^{m}}^{3}$, $S(a,b,c)$ takes values from the set $\{0, (p-1)p^{m}, 2(p-1)p^{m}, \pm(p-1)p^{\frac{m+1}{2}}, \pm2(p-1)p^{\frac{m+1}{2}}\}$.
\end{lemma}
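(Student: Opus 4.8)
The plan is to recognize $S(a,b,c)$ as a sum of two instances of the auxiliary sum $D(u,v)$ from (\ref{Eq:3.3}), and then to read off the admissible values directly from Lemmas \ref{Le:3.3} and \ref{Le:3.4}. Comparing the two exponents appearing in (\ref{Eq:3.1}) with the definition (\ref{Eq:3.3}), the first summand is exactly $D(a+b,c)$ (taking $u=a+b$, $v=c$) and the second is $D((a-b)\lambda,\,c\lambda)$ (taking $u=(a-b)\lambda$, $v=c\lambda$). Hence
\[
S(a,b,c) = D(a+b,c) + D\big((a-b)\lambda,\, c\lambda\big),
\]
and the problem reduces to determining which values a sum of these two $D$-terms can assume. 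I would split the analysis according to whether $c=0$ or $c\neq 0$.

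If $c=0$, both $D$-terms are governed by Lemma \ref{Le:3.3}. The first term equals $(p-1)p^{m}$ when $a+b=0$ and $0$ otherwise; since $\lambda\neq 0$, the second term equals $(p-1)p^{m}$ when $a-b=0$ and $0$ otherwise. Because $p$ is odd, the conditions $a+b=0$ and $a-b=0$ hold simultaneously only when $a=b=0$. Enumerating the three possibilities — both conditions hold (i.e.\ $a=b=0$), exactly one holds, or neither holds — yields $S(a,b,c)\in\{2(p-1)p^{m},\,(p-1)p^{m},\,0\}$.

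If $c\neq 0$, then both $c$ and $c\lambda$ are nonzero, so each of the two terms is an instance of $D(u,v)$ with $v\in\mathbb{F}_{p^{m}}^{\ast}$. By Lemma \ref{Le:3.4} each term therefore lies in $\{0,\,\pm(p-1)p^{\frac{m+1}{2}}\}$. Adding two values drawn from this three-element set produces exactly $\{0,\,\pm(p-1)p^{\frac{m+1}{2}},\,\pm 2(p-1)p^{\frac{m+1}{2}}\}$. Taking the union of the values obtained in the two cases gives precisely the claimed set.

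This argument is essentially bookkeeping once the decomposition $S(a,b,c)=D(a+b,c)+D((a-b)\lambda,c\lambda)$ is in place; the only points requiring a little care are verifying the exponent matching that justifies the decomposition and checking that $a+b=0$ and $a-b=0$ are simultaneously satisfiable only at $a=b=0$ (which uses that $p$ is odd). I do not expect a genuine obstacle here: the substantial work — pinning down the exact frequency of each value, which the six-weight distribution ultimately requires — is not needed for this statement and would be carried out in subsequent lemmas by counting the triples $(a,b,c)$ that land on each value.
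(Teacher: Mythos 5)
Your proof is correct and follows essentially the same route as the paper: decompose $S(a,b,c)$ into a sum of two $D$-sums and enumerate cases using Lemmas \ref{Le:3.3} and \ref{Le:3.4}. The only cosmetic differences are that the paper first silently simplifies the second term $D\big((a-b)\lambda,c\lambda\big)$ to $D(a-b,c)$ (valid because $\lambda\in\mathbb{F}_{p}^{\ast}$, so the substitution $y\mapsto\lambda y$ permutes $\mathbb{F}_{p}^{\ast}$) and argues the $c\neq0$ case by rank considerations from Lemma \ref{Le:2.3} rather than by quoting the value set from Lemma \ref{Le:3.4}; your version, which keeps the second term as written and splits cleanly on $c=0$ versus $c\neq0$, is if anything more complete, since the paper's case list omits configurations such as $c\neq0$ with $a+b=0$.
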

\begin{proof}
Following the notation above, we have
$S(a,b,c)=D(a+b,c)+D(a-b,c)$.
Case I. In the case of $a=b=c=0$, $D(a+b,c)=D(a-b,c)=(p-1)p^{m}$, so $S(a,b,c)=2(p-1)p^{m}$.

\noindent Case II. In the case of  $c=0, a=-b\neq 0$ or $c=0, a=b\neq 0$,
exactly one of $Q(a+b,c)$ and $Q(a-b,c)$ has rank $m$, the other has rank $0$. Then by Lemma \ref{Le:2.3}, we have $S(a,b,c)=(p-1)p^{m}$.

\noindent Case III. In the case of $c\neq0, a+b\neq 0, a-b\neq 0$, again by Lemma \ref{Le:2.3},  $S(a,b,c)\neq 0$ only if $Q(a+b,c)$ or $Q(a-b,c)$ has even rank. Thus $S(a,b,c)=\pm(p-1)p^{\frac{m+1}{2}}$ if $Q(a+b,c)$ has rank $m$ or $m-2$ and $Q(a-b,c)$ has rank $m-1$ or $Q(a-b,c)$ has rank $m$ or $m-2$ and $Q(a+b,c)$ has rank $m-1$. $S(a,b,c)=\pm2(p-1)p^{\frac{m+1}{2}}$ if $Q(a+b,c)$ has rank $m-1$ and $Q(a-b,c)$ has rank $m-1$. And otherwise $S(a,b,c)=0$. This completes the proof.
\end{proof}

\begin{theorem}\label{Th:3.5}
Let $k$ be even and  $S(a,b,c)$ be defined by (\ref{Eq:3.1}). Then as $(a,b,c)$ runs through $\mathbb{F}_{p^{m}}^{3}$, the value distribution of $S(a,b,c)$ is given by Table 2.
\end{theorem}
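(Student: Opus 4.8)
The plan is to build on the decomposition $S(a,b,c)=D(a+b,c)+D(a-b,c)$ already recorded in the proof of Lemma \ref{Le:3.2}, and to reduce the computation to the joint behaviour of two copies of $D(\cdot,\cdot)$ sharing a common second argument. First I would make the linear substitution $u_{1}=a+b$, $u_{2}=a-b$, $v=c$. Since $p$ is odd, the map $(a,b)\mapsto(a+b,a-b)$ is a bijection of $\mathbb{F}_{p^{m}}^{2}$ onto itself, so as $(a,b,c)$ ranges over $\mathbb{F}_{p^{m}}^{3}$ the triple $(u_{1},u_{2},v)$ ranges over $\mathbb{F}_{p^{m}}^{3}$ as well, and $S=D(u_{1},v)+D(u_{2},v)$. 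It therefore suffices to count, for each admissible value, the number of triples $(u_{1},u_{2},v)$ producing it, which I would organize by whether $v=0$.

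When $v=0$, Lemma \ref{Le:3.3} gives $D(u,0)=(p-1)p^{m}$ or $0$ according as $u=0$ or $u\neq 0$. Hence $S=2(p-1)p^{m}$ for the single triple $u_{1}=u_{2}=0$, $S=(p-1)p^{m}$ for the $2(p^{m}-1)$ triples with exactly one of $u_{1},u_{2}$ zero, and $S=0$ for the remaining $(p^{m}-1)^{2}$ triples. These are precisely the sources of the two large values $(p-1)p^{m}$ and $2(p-1)p^{m}$ in Table 2.

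The main case is $v\neq 0$. For each of the $p^{m}-1$ nonzero values of $v$, Lemma \ref{Le:3.4} states that $D(u,v)$ takes the values $0$, $+(p-1)p^{\frac{m+1}{2}}$, $-(p-1)p^{\frac{m+1}{2}}$ with frequencies $N_{0}=p^{m}-p^{m-1}$, $N_{+}=\frac12(p^{m-1}+p^{\frac{m-1}{2}})$, $N_{-}=\frac12(p^{m-1}-p^{\frac{m-1}{2}})$, and crucially these frequencies do not depend on $v$. Because $u_{1}$ and $u_{2}$ range independently over $\mathbb{F}_{p^{m}}$ once $v$ is fixed, the number of pairs with $D(u_{1},v)=\alpha$ and $D(u_{2},v)=\beta$ factors as $N_{\alpha}N_{\beta}$. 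Enumerating the six ordered outcomes gives, for each fixed $v\neq 0$: the value $2(p-1)p^{\frac{m+1}{2}}$ with frequency $N_{+}^{2}$; the value $-2(p-1)p^{\frac{m+1}{2}}$ with frequency $N_{-}^{2}$; the value $+(p-1)p^{\frac{m+1}{2}}$ with frequency $2N_{0}N_{+}$; the value $-(p-1)p^{\frac{m+1}{2}}$ with frequency $2N_{0}N_{-}$; and the value $0$ with frequency $N_{0}^{2}+2N_{+}N_{-}$, the zero arising both from $0+0$ and from the cancellation $(+)+(-)$. Multiplying each frequency by $p^{m}-1$ and adding the $v=0$ tallies from the previous paragraph, then expanding $N_{0},N_{\pm}$ into powers of $p$, produces every entry of Table 2.

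I expect the genuine work to lie entirely in the final bookkeeping: the value $0$ collects contributions from three distinct mechanisms (the $v=0$ triples, the $0+0$ pairs, and the $(+)+(-)$ cancellations for $v\neq 0$), so the main obstacle is assembling these correctly and simplifying $N_{0}^{2}+2N_{+}N_{-}$, $2N_{0}N_{\pm}$, and $N_{\pm}^{2}$ into the closed forms asserted. As a consistency check I would verify that all frequencies sum to $p^{3m}$, the total number of triples $(a,b,c)$; this both validates the arithmetic and rules out an omitted case, using the identities $(N_{0}+N_{+}+N_{-})^{2}=p^{2m}$ per nonzero $v$ and $\bigl(1+(p^{m}-1)\bigr)^{2}=p^{2m}$ for $v=0$.
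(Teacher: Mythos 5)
Your proposal is correct and follows essentially the same route as the paper: substitute $(u_1,u_2,v)=(a+b,a-b,c)$, split on $v=0$ versus $v\neq 0$, and combine Lemma \ref{Le:3.3} with the $v$-independent frequencies of Lemma \ref{Le:3.4} using the independence of $u_1$ and $u_2$. The only cosmetic difference is that the paper obtains the frequency of the value $0$ by subtracting all other counts from $p^{3m}$, whereas you compute it directly as $(p^m-1)^2+(p^m-1)\bigl(N_0^2+2N_+N_-\bigr)$, which simplifies to the same expression.
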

\begin{table}[htbp]\label{T:2}
\caption{Value Distribution of $S(a,b,c)$}
\centering
\begin{tabular}{ll}
 \hline
 Value& Frequency\\
 \hline
 $2(p-1)p^{m}$ &1\\
 $(p-1)p^{m}$ &  $2(p^{m}-1)$\\
 $(p-1)p^{\frac{m+1}{2}}$ & $(p^{m}-1)(p^{m}-p^{m-1})(p^{m-1}+p^{\frac{m-1}{2}})$\\
 $-(p-1)p^{\frac{m+1}{2}}$ &  $(p^{m}-1)p^{m}-p^{m-1})(p^{m-1}-p^{\frac{m-1}{2}})$\\
 $2(p-1)p^{\frac{m+1}{2}}$ & $\frac{1}{4}(p^{m}-1)(p^{m-1}+p^{\frac{m-1}{2}})^{2}$\\
 $-2(p-1)p^{\frac{m+1}{2}}$ & $\frac{1}{4}(p^{m}-1)(p^{m-1}-p^{\frac{m-1}{2}})^{2}$\\
 $0$ & $(p^{m}-1)(p^{2m}+\frac{3}{2}p^{2(m-1)}-2p^{2m-1}+p^{m}-\frac{1}{2}p^{m-1}-1)$\\
 \hline
\end{tabular}
\end{table}
\begin{proof}
The distribution of $S(a,b,c)=(p-1)p^{m}$ or $2(p-1)p^{m}$ can be easily obtained by Lemma \ref{Le:3.2}. To determine the distribution of the other values, we define
\[
N_{\epsilon}=\#\{(a,b,c)\in \mathbb{F}_{p^{m}}^{3} : S(a,b,c)=\epsilon (p-1)p^{\frac{m+1}{2}}\}
\]
where $\epsilon=0,\pm1,\pm2$.
Then we have
\[
\begin{split}
N_{1}&=\#\{(a,b,c)\in \mathbb{F}_{p^{m}}^{3} : S(a,b,c)= D(a+b,c)+D(a-b,c)= (p-1)p^{\frac{m+1}{2}}\}\\
&=\#\{(u_{1},u_{2},c)\in \mathbb{F}_{p^{m}}^{3} : D(u_{1},c)+D(u_{2},c)= (p-1)p^{\frac{m+1}{2}}\}\\
&=\#\{(u_{1},u_{2}\in \mathbb{F}_{p^{m}}^{2}, ,c \in \mathbb{F}_{p^{m}}^{\ast}: D(u_{1},c)+D(u_{2},c)= (p-1)p^{\frac{m+1}{2}}\}+\\
 &\relphantom{=} {}\#\{(u_{1},u_{2}\in \mathbb{F}_{p^{m}}^{2} : D(u_{1},0)+D(u_{2},0)= (p-1)p^{\frac{m+1}{2}}\}\\
&=\#\{(u_{1},u_{2}\in \mathbb{F}_{p^{m}}^{2}, ,c \in \mathbb{F}_{p^{m}}^{\ast}: D(u_{1},c)+D(u_{2},c)= (p-1)p^{\frac{m+1}{2}}\}\\
&=\#\{(u_{1},u_{2}\in \mathbb{F}_{p^{m}}^{2}, ,c \in \mathbb{F}_{p^{m}}^{\ast}: D(u_{1},c)=0, D(u_{2},c)= (p-1)p^{\frac{m+1}{2}}\}+\\
&\relphantom{=} {}\#\{(u_{1},u_{2}\in \mathbb{F}_{p^{m}}^{2}, ,c \in \mathbb{F}_{p^{m}}^{\ast}: D(u_{1},c)=(p-1)p^{\frac{m+1}{2}}, D(u_{2},c)=0 \}\\
&=2n_{0}n_{1}(p^{m}-1)\\
&=(p^{m}-1)(p^{m}-p^{m-1})(p^{m-1}+p^{\frac{m-1}{2}}),
\end{split}
\]
where the second part of the third identity is 0 by Lemma \ref{Le:3.3} and the sixth identity is obtained by Lemma \ref{Le:3.4}.

Similarly, we get
\[
N_{-1}=2n_{0}n_{-1}(p^{m}-1)=(p^{m}-1)(p^{m}-p^{m-1})(p^{m-1}-p^{\frac{m-1}{2}}),
\]
\[
N_{2}=n_{1}^{2}(p^{m}-1)=\frac{1}{4}(p^{m}-1)(p^{m-1}+p^{\frac{m-1}{2}})^{2},
\]
\[
N_{-2}=n_{1}^{2}(p^{m}-1)=\frac{1}{4}(p^{m}-1)(p^{m-1}-p^{\frac{m-1}{2}})^{2}
\]
and
\[
\begin{split}
N_{0}&=p^{3m}-1-2(p^{m}-1)-N_{1}-N_{-1}-N_{2}-N_{-2}\\
&=(p^{m}-1)(p^{2m}+\frac{3}{2}p^{2(m-1)}-2p^{2m-1}+p^{m}-\frac{1}{2}p^{m-1}-1)
\end{split}
\]
\end{proof}
\remark Following the notations above, we have
$T(a,b,c)=D(a+b,c)+D(a-b,-c)$. It can be shown that  the value distribution of $T(a,b,c)$ in the case of $k$ is odd is the same as the value distribution of $S(a,b,c)$ in the case of $k$ is even.

The following is the main result of this paper.
\begin{theorem}\label{Th:3.6}
$\mathcal{C}_{(p,m,k)}$ is a  cyclic code over $\mathbb{F}_{p}$ with parameters $[p^{m}-1, 3m, \frac{p-1}{2}p^{m-1}]$. Furthermore, the weight distribution of $\mathcal{C}_{(p,m,k)}$ is given by Table 3.
\end{theorem}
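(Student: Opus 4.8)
The plan is to treat Theorem~\ref{Th:3.6} as a translation of the already-established value distributions of the exponential sums $S(a,b,c)$ and $T(a,b,c)$ into the language of Hamming weights. The length $p^{m}-1$ and dimension $3m$ are immediate from the construction in Section~3: $h(x)=h_{0}(x)h_{1}(x)h_{2}(x)$ is a product of three pairwise distinct minimal polynomials each of degree $m$, it divides $x^{p^{m}-1}-1$, and it is the parity-check polynomial of $\mathcal{C}_{(p,m,k)}$, so the code has length $p^{m}-1$ and dimension $3m$. The substantive content is the weight enumerator, and this follows from the formula derived earlier, namely $W(\mathbf{c}_{(a,b,c)})=p^{m}-p^{m-1}-\frac{1}{2p}S(a,b,c)$ when $k$ is even, together with its analogue in $T(a,b,c)$ when $k$ is odd.

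First I would record that $(a,b,c)\mapsto\mathbf{c}_{(a,b,c)}$ is a bijection between $\mathbb{F}_{p^{m}}^{3}$ and $\mathcal{C}_{(p,m,k)}$: both sides have cardinality $p^{3m}$, the code because its dimension is $3m$, so the frequencies counted over parameter triples in Theorem~\ref{Th:3.5} are exactly the frequencies of the corresponding codewords. I would then substitute each of the seven values of $S(a,b,c)$ from Table~2 into the weight formula. For example $S=2(p-1)p^{m}$ (frequency $1$) gives $W=0$, the zero codeword; $S=(p-1)p^{m}$ gives $W=\frac{p-1}{2}p^{m-1}$; $S=\pm(p-1)p^{\frac{m+1}{2}}$ gives $W=(p-1)p^{m-1}\mp\frac{p-1}{2}p^{\frac{m-1}{2}}$; $S=\pm2(p-1)p^{\frac{m+1}{2}}$ gives $W=(p-1)p^{m-1}\mp(p-1)p^{\frac{m-1}{2}}$; and $S=0$ gives $W=(p-1)p^{m-1}$. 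This yields exactly six distinct nonzero weights, each inheriting its frequency from Table~2, which is precisely the content of Table~3.

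For odd $k$ I would invoke the Remark following Theorem~\ref{Th:3.5}, which asserts that the value distribution of $T(a,b,c)$ coincides with that of $S(a,b,c)$; hence the weight distribution is identical and both parities of $k$ collapse into one table. Finally I would identify the minimum distance by comparing the six nonzero weights. The two smallest candidates are $\frac{p-1}{2}p^{m-1}$ and $(p-1)\big(p^{m-1}-p^{\frac{m-1}{2}}\big)$, and the former is the smaller precisely when $\frac{1}{2}p^{m-1}<p^{m-1}-p^{\frac{m-1}{2}}$, that is, when $p^{\frac{m-1}{2}}>2$, which holds for every odd prime $p$ and odd $m\geq 3$. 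Hence $d=\frac{p-1}{2}p^{m-1}$.

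I do not anticipate a genuine obstacle, since the heavy lifting---extracting the value distribution of the Weil-type sums from the rank analysis of the quadratic forms $Q_{u,v}$---was already completed in Lemma~\ref{Le:3.2} and Theorem~\ref{Th:3.5}. The only points requiring care are the bijectivity claim, so that parameter-triple counts coincide with codeword counts; the arithmetic of the six weight substitutions; and the short inequality confirming that $\frac{p-1}{2}p^{m-1}$ is genuinely the minimum rather than one of the slightly larger weights.
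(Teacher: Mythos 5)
Your proposal is correct and follows essentially the same route as the paper, whose proof is a two-sentence appeal to the weight formulas $W(\mathbf{c}_{(a,b,c)})=p^{m}-p^{m-1}-\frac{1}{2p}S(a,b,c)$ (resp.\ $T(a,b,c)$), Theorem~\ref{Th:3.5}, and the Remark; you have simply made explicit the substitution of the seven values of $S$, the bijection between parameter triples and codewords, and the inequality identifying $\frac{p-1}{2}p^{m-1}$ as the minimum weight, all of which the paper leaves implicit.
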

\begin{table}[htbp]\label{T:2}
\caption{Weight Distribution of $\mathcal{C}_{(p,m,k)}$}
\centering
\begin{tabular}{ll}
 \hline
 Weight& Frequency\\
 \hline
  $0$ &1\\
 $\frac{p-1}{2}p^{m-1}$ &  $2(p^{m}-1)$\\
 $\frac{p-1}{2}(2p^{m-1}-p^{\frac{m-1}{2}})$ & $(p^{m}-1)(p^{m}-p^{m-1})(p^{m-1}+p^{\frac{m-1}{2}})$\\
 $\frac{p-1}{2}(2p^{m-1}+p^{\frac{m-1}{2}})$ &  $(p^{m}-1)p^{m}-p^{m-1})(p^{m-1}-p^{\frac{m-1}{2}})$\\
 $(p-1)(p^{m-1}-p^{\frac{m-1}{2}})$ & $\frac{1}{4}(p^{m}-1)(p^{m-1}+p^{\frac{m-1}{2}})^{2}$\\
 $(p-1)(p^{m-1}+p^{\frac{m-1}{2}})$ & $\frac{1}{4}(p^{m}-1)(p^{m-1}-p^{\frac{m-1}{2}})^{2}$\\
 $(p-1)p^{m-1}$ & $(p^{m}-1)(p^{2m}+\frac{3}{2}p^{2(m-1)}-2p^{2m-1}+p^{m}-\frac{1}{2}p^{m-1}-1)$\\
 \hline
\end{tabular}
\end{table}
\begin{proof}
The length and dimension of $\mathcal{C}_{(p,m,k)}$ follow directly from its definition. The minimal weight and weight distribution of $\mathcal{C}_{(p,m,k)}$ follow from Eqs. (\ref{Eq:3.1}) and  (\ref{Eq:3.2}), Theorem \ref{Th:3.5} and the Remark above.
\end{proof}
\begin{example}
Let $p=3$, $m=3$ and $k=1$. The the code $\mathcal{C}_{(3,3,1)}$ is a $[26,9,9]$ cyclic code over $\mathbb{F}_{3}$ with weight enumerator
\[
1+52z^{9}+936z^{12}+5616z^{15}+10036z^{18}+2808z^{21}+234z^{24},
\]
which confirms the weight distribution in Table 3.
\end{example}
\begin{example}
Let $p=3$, $m=5$ and $k=2$. The the code $\mathcal{C}_{(3,5,1)}$ is a $[242,15,81]$ cyclic code over $\mathbb{F}_{3}$ with weight enumerator
\[
1+484z^{81}+490050z^{144}+3828360z^{153}+7193692z^{162}+2822688z^{171}+313632z^{180},
\]
which confirms the weight distribution in Table 3.
\end{example}
\begin{example}
Let $p=3$, $m=7$ and $k=2$. The the code $\mathcal{C}_{(3,7,2)}$ is a $[2186,21,729]$ cyclic code over $\mathbb{F}_{3}$ with weight enumerator
\[
\begin{split}
&1+4372z^{729}+312344424z^{1404}+2409514128z^{1431}+5231766916z^{1458}+2237405976z^{1485}\\
&+269317386z^{1512},
\end{split}
\]
which confirms the weight distribution in Table 3.
\end{example}


\begin{thebibliography}{20}
\bibitem{1}
{L.D. Baumert, R.J. McEliece}, ¡°Weights of irreducible cyclic codes¡±,
{\it Inf. Contr.}, {\bf 20, no. 2}~(1972), 158-175.
\bibitem{2}
{L.D. Baumert, J. Mykkeltveit}, ¡°Weight distribution of some irreducible cyclic codes¡±,
{\it DSN Progr. Rep.},  {\bf 16}~(1973),  128-131.

\bibitem{3}
{A.R. Calderbank, J.M. Goethals}, ¡°Three-weight codes and association schemes¡±,
{\it Philips J. Res.},  {\bf 39}~(1984), 143-152.
\bibitem{4}
{C. Carlet, C. Ding, J. Yuan}, ¡°Linear codes from highly nonlinear functions and their secret sharing schemes¡±,
{\it IEEE Trans. Inf. Theory},  {\bf 51, no.6}~(2005), 2089-2102.
\bibitem{5}
{C. Ding}, ¡°The weight distribution of some irreducible cyclic codes¡±,
{\it IEEE Trans. Inf. Theory},  {\bf 55, no. 3}~(2009), 955-960.
\bibitem{6}
{C. Ding, J. Yang}, ¡°Hamming weights in irreducible cyclic codes¡±,
{\it Discrete Mathematics},  {\bf 313, no. 4}~(2013), 434-446.
\bibitem{7}
{C. Ding, Y. Liu, C. Ma, L. Zeng}, ¡°The weight distributions of the duals of cyclic codes with two zeros¡±,
{\it IEEE Trans. Inf. Theory},  {\bf 57, no. 12}~(2011), 8000-8006.
\bibitem{8}
{K. Feng, J. Luo}, ¡°Value distributions of exponential sums from perfect nonlinar functions and their applications¡±,
{\it IEEE Trans. Inf. Theory},  {\bf 53, no. 7}~(2007), 3035-3041.
\bibitem{9}
{K. Feng, J. Luo}, ¡°Weight distribution of some reducible cyclic codes ¡±,
{\it Finite Fields Appl.},  {\bf 14, no. 2}~(2008), 390-409.
\bibitem{10}
{T. Feng}, ¡°On cyclic codes of length $2^{2^{r}}-1$ with two zeros whose dual codes have three weights¡±,
{\it Des. Codes Crypogr.},  {\bf 62}~(2012), 253-258.
\bibitem{18}
{T. Feng£¬ KaHin Leung, Qing Xiang}, ¡°Binary cyclic codes with two primitive nonzeros¡±,
{\it Sci. China Math.},  {\bf 56,no. 7}~(2012), 1403-1412.
\bibitem{11}
{R. Lidl, H. Niederreiter}, ¡°Finite fieds¡±,
{\it Addison-Wdsley Publishing Inc.},  (1983).
\bibitem{12}
{J. Luo, K. Feng}, ¡°On the weight distribution of two classes of  cyclic codes¡±,
{\it IEEE Trans. Inf. Theory},  {\bf 54, no. 12}~(2008), 5332-5344.
\bibitem{13}
{C. Ma, L. Zeng, Y. Liu, D. Feng, C. Ding}, ¡°The weight enumerator of a class of cyclic codes¡±,
{\it IEEE Trans. Inf. Theory},  {\bf 57, no. 1}~(2011), 397-402.

\bibitem{14}
{J. Yuan, C. Carlet, C. Ding}, ¡°The weight distribution of a class of linear codes from perfect nonlinear functions¡±,
{\it IEEE Trans. Inf. Theory},  {\bf 52, no. 2}~(2006), 712-717.
\bibitem{15}
{B. Wang, C. Tang, Y. Qi, Y. Yang, M. Xu}, ¡°The weight distributions of cyclic codes and elliptic curves¡±,
{\it IEEE Trans. Inf. Theory},  {\bf 58, no. 12}~(2012), 7253-7259.

\bibitem{19}
{Z. Zhou, C. Ding}, ¡°A class of three-weight cyclic codes¡±,
{\it Finite Fields Appl.},  {\bf 25}~(2014), 79-93.
\bibitem{20}
{Z. Zhou, C. Ding, J. Luo, A. Zhang}, ¡°A family of five-weight cyclic codes and their weight enumerators¡±,
{\it arXiv: 1302.0952v1}, (2013).











\end{thebibliography}
\end{document}